\newtheorem{thm}{Theorem}
\newtheorem{cor}[thm]{Corollary}
\newtheorem{lem}[thm]{Lemma}
\newtheorem{rem}[thm]{Remark}
\newtheorem{ex}[thm]{Example}
\newtheorem{claim}[thm]{Claim}
\newcommand{\N}{\mathbb{N}}
\newcommand{\Z}{\mathbb{Z}}
\newcommand{\R}{\mathbb{R}}
\begin{document}

\title{Subdifferentiable functions satisfy Lusin properties of class $C^1$ or $C^2$}

\author{D. Azagra}
\address{ICMAT (CSIC-UAM-UC3-UCM), Departamento de An{\'a}lisis Matem{\'a}tico,
Facultad Ciencias Matem{\'a}ticas, Universidad Complutense, 28040, Madrid, Spain}
\email{azagra@mat.ucm.es}

\author{J. Ferrera}
\address{IMI, Departamento de An{\'a}lisis Matem{\'a}tico,
Facultad Ciencias Matem{\'a}ticas, Universidad Complutense, 28040, Madrid, Spain}
\email{ferrera@mat.ucm.es}

\author{M. Garc\'ia-Bravo}
\address{ICMAT  (CSIC-UAM-UC3-UCM), Calle Nicol\'as Cabrera 13-15.
28049 Madrid, Spain}
\email{miguel.garcia@icmat.es}

\author{J. G\'omez-Gil}
\address{Departamento de An{\'a}lisis Matem{\'a}tico,
Facultad Ciencias Matem{\'a}ticas, Universidad Complutense, 28040, Madrid, Spain}
\email{gomezgil@mat.ucm.es}

\keywords{Lusin property of order 2, Proximal subdifferential, Fr\'echet subdifferential.}

\begin{abstract}
Let $f:\R^n\to\R$ be a function. Assume that for a measurable set
$\Omega$  and almost every $x\in\Omega$ there exists a vector
$\xi_x\in\R^n$ such that $$\liminf_{h\to 0}\frac{f(x+h)-f(x)-\langle
  \xi_x, h\rangle}{|h|^2}>-\infty.$$ Then we show that $f$ satisfies a
Lusin-type property of order $2$ in $\Omega$, that is to say, for
every $\varepsilon>0$ there exists a function $g\in C^2(\R^n)$ such
that $\mathcal{L}^{n}(\{x\in\Omega : f(x) \neq g(x)\} \leq\varepsilon$. In particular every function which has a nonempty proximal subdifferential almost everywhere also has the Lusin property of class $C^2$. We also obtain a similar result (replacing $C^2$ with $C^1$) for the Fr\'echet subdifferential. Finally we provide some examples showing that this kind of results are no longer true for {\em Taylor subexpansions} of higher order.
\end{abstract}

\maketitle

A classical theorem of Lusin \cite{Lusin} states that for every Lebesgue measurable function $f:\R^n\to\R$ and every $\varepsilon>0$ there exists a continuous function $g:\R^n\to\R$ such that 
\begin{equation}\label{LusinProperty}
\mathcal{L}^n\left(\{x\in \R^n : f(x)\neq g(x)\}\right)\leq\varepsilon.
\end{equation}
Here, as in the rest of this note, $\mathcal{L}^{n}$ denotes the Lebesgue measure in $\R^n$.

Several authors have shown that one can take $g$ of class $C^k$, provided that $f$ has some regularity properties of order $k$ (for instance, locally bounded distributional derivatives up to the order $k$, or Taylor expansions of order $k$ almost everywhere). If, given a differentiability class $\mathcal{C}$ and a function $f:\R^n\to\R$ we can find, for each $\varepsilon>0$, a function $g\in\mathcal{C}$ satisfying \eqref{LusinProperty}, we will say that $f$ {\em has the Lusin property of class $\mathcal{C}$}.

The first of such results was discovered by Federer \cite[p. 442]{Federer}, who showed that a.e differentiable functions (and in particular locally Lipschitz functions) have the Lusin property of class $C^1$. H. Whitney \cite{Whitney2} improved this result by showing that a function $f:\R^n\to\R$ has approximate partial derivatives of first order a.e. if and only if $f$ has the Lusin property of class $C^1$. 

In \cite[Theorem 13]{CalderonZygmund} Calderon and Zygmund established analogous results of order $k$ for the classes of Sobolev functions $W^{k,p}(\R^n)$. Other authors, including Liu \cite{Liu1977}, Bagby, Michael and Ziemer \cite{BagbyZiemer, MichaelZiemer, Ziemer}, Bojarski, Haj\l asz and Strzelecki \cite{BojarskiHajlasz, BojarskiHajlaszStrzelecki}, and Bourgain, Korobkov and Kristensen \cite{BourKoKris2} have improved Calderon and Zygmund's result in different ways, by obtaining additional estimates for $f-g$ in the Sobolev norms, as well as the Bessel capacities or the Hausdorff contents of the exceptional sets where $f\neq g$. In \cite{BourKoKris2} some Lusin properties of the class $BV_{k}(\R^n)$ (of integrable functions whose distributional derivatives of order up to $k$ are Radon measures) are also established. The Whitney extension technique \cite{Whitney}, and some related techniques as the {\em Whitney smoothing} introduced in \cite{BojarskiHajlaszStrzelecki},  play a key role in the proofs of all of these results. 

For the special class of convex functions $f:\R^n\to\R$, Alberti and Imonkulov \cite{Alberti2, Imonkulov} showed that every convex function has the Lusin property of class $C^2$ (with $g$ not necessarily convex in \eqref{LusinProperty}); see also \cite{Alberti} for a related problem. More recently Azagra and Haj\l asz \cite{AzagraHajlasz} have proved that $g$ can be taken to be $C^1$ and convex in \eqref{LusinProperty} if and only if either $f$ is essentially coercive (meaning that $f$ is coercive up to a linear perturbation) or else $f$ is already $C^1$ (in which case taking $g=f$ is the only possible option); they have also shown that if $f:\R^n\to\R$ is strongly convex then for every $A\subset\R^n$ of finite measure and every $\varepsilon>0$ there exists $g:\R^n\to\R$ convex and $C^{1, 1}$ such that $\mathcal{L}^n\left(\{x\in A : f(x)\neq g(x)\}\right)\leq\varepsilon$.

On the other hand, generalizing Whitney's result \cite{Whitney2} to higher orders of differentiability, Isakov \cite{Isakov} and Liu and Tai \cite{LiuTai} independently established that a function $f:\R^n\to\R$ has the Lusin property of class $C^{k}$ if and only if $f$ is approximately differentiable of order $k$ almost everywhere (and if and only if $f$ has an approximate $(k-1)$-Taylor polynomial at almost every point).

In this note we will answer the following question (which we think may be quite natural for people working on nonsmooth analysis or viscosity solutions to PDE such as Hamilton-Jacobi equations): do functions with nonempty subdifferentials a.e. have Lusin properties of order $C^1$ or $C^2$? By subdifferentials we mean the Fr\'echet subdifferential, or the proximal subdifferential, or the second order viscosity subdifferential; see \cite{ClarkeEtAl, CIL, FerreraBook} and the references therein for information about subdifferentials and their applications. As we will see the answer is positive: Fr\'echet subdifferentiable functions have the Lusin property of class $C^1$, and functions with nonempty proximal subdifferentials a.e. (in particular functions with a.e. nonempty viscosity subdifferentials of order $2$) have the Lusin property of class $C^2$. 

This question can be formulated in a more general form (perhaps appealing to a wider audience) as a problem about {\em Taylor subexpansions}: given $k\in\N$ and a function $f:\R^n\to\R$, assume that for almost every $x\in\R^n$ there exists a polynomial $P_x$ of degree less than or equal to $k-1$ such that
$$
\liminf_{y\to x}\frac{f(y)-P_{x}(y)}{|y-x|^{k}}>-\infty.
$$
Is it then true that $f$ has the Lusin property of order $k$? 

The results of this note will show that the answer to this question is positive for $k=1, 2$, but negative for $k\geq 3$.

In the case $k=1$ the proof is very simple and natural.

\begin{thm}\label{Theorem for k=1}
Let $\Omega\subset\R^n$ be a Lebesgue measurable set, and $f:\Omega\rightarrow\R$ a function. Assume that for almost every $x\in\Omega$ we have
\begin{equation}\label{1}
\liminf_{y\rightarrow x}\frac{f(y)-f(x)}{|y-x|}>-\infty.
\end{equation}
Then, for every $\varepsilon>0$ there exists a function $g\in C^{1}(\R^n)$ such that
$$
\mathcal{L}^n\left(\{x\in \Omega : f(x)\neq g(x)\}\right)\leq\varepsilon.
$$
\end{thm}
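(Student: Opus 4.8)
The plan is to reduce the problem to the classical result of Federer (and Whitney) that an almost-everywhere differentiable function has the Lusin property of class $C^1$. The condition \eqref{1} says that at a.e.\ $x\in\Omega$ the function $f$ has a \emph{lower} Taylor subexpansion of order $1$ with the trivial polynomial $P_x(y)=f(x)$; the key observation is that having such a one-sided first-order estimate \emph{everywhere on a set} forces $f$ to be, after removing a small-measure set, locally Lipschitz. Concretely, for $k\in\N$ let
$$
\Omega_k=\Bigl\{x\in\Omega : f(y)-f(x)\geq -k\,|y-x|\ \text{ whenever } |y-x|\leq \tfrac{1}{k}\Bigr\}.
$$
Since the $\liminf$ in \eqref{1} is finite a.e., we have $\mathcal{L}^n\bigl(\Omega\setminus\bigcup_k\Omega_k\bigr)=0$, so it suffices to prove the statement with $\Omega$ replaced by a single $\Omega_k$ (choose $k$ large enough that $\mathcal{L}^n(\Omega\setminus\Omega_k)<\varepsilon/2$, then cover $\Omega_k$ by countably many balls of radius $<\tfrac{1}{2k}$ and work in each).

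Next I would exploit symmetry. On $\Omega_k$ we control $f$ from below; to get a two-sided Lipschitz bound I would look at pairs of points $x,x'\in\Omega_k$ and apply the one-sided inequality \emph{in both directions}: $f(x')-f(x)\geq -k|x'-x|$ and $f(x)-f(x')\geq -k|x'-x|$, which together give $|f(x)-f(x')|\leq k|x-x'|$ \emph{whenever $|x-x'|\leq 1/k$}. Thus $f$ restricted to $\Omega_k$ (intersected with a ball of small enough radius) is Lipschitz with constant $k$. By the Lipschitz extension theorem (Kirszbraun, or simply the McShane–Whitney formula $F(x)=\inf_{z\in\Omega_k}\{f(z)+k|x-z|\}$), $f|_{\Omega_k}$ extends to a globally Lipschitz function $F:\R^n\to\R$ with the same constant. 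Then $F$ is differentiable a.e.\ by Rademacher's theorem, hence has the Lusin property of class $C^1$ by Federer's result cited in the introduction: there is $g\in C^1(\R^n)$ with $\mathcal{L}^n(\{F\neq g\})<\varepsilon/2$. Since $f=F$ on $\Omega_k$, we get $\{x\in\Omega_k : f(x)\neq g(x)\}\subset\{F\neq g\}$, and combining with $\mathcal{L}^n(\Omega\setminus\Omega_k)<\varepsilon/2$ finishes the proof.

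The one genuine subtlety—the step I expect to require the most care—is the \emph{localization}: the bound $f(y)-f(x)\geq -k|y-x|$ only holds for $|y-x|\leq 1/k$, so the pairwise Lipschitz estimate on $\Omega_k$ is only local, and one cannot immediately McShane-extend from all of $\Omega_k$. The clean fix is to partition $\R^n$ into a locally finite family of cubes (or balls) of diameter $<1/k$, apply the argument on $\Omega_k\cap Q$ for each cube $Q$ separately to obtain $C^1$ functions $g_Q$ agreeing with $f$ off a set of measure $<\varepsilon\,2^{-(\text{index of }Q)-1}$, and then glue the $g_Q$ together using a $C^\infty$ partition of unity subordinate to a slight thickening of the cubes. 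One checks that the resulting $g$ is $C^1$ and that $\{x\in\Omega_k: f(x)\neq g(x)\}$ is still contained in a set of measure $<\varepsilon/2$, because the exceptional sets add up in a controlled geometric series. Everything else—measurability of the $\Omega_k$, the countable exhaustion, Rademacher and Federer—is standard and can be invoked directly.
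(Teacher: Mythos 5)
Your proposal is correct in its essentials and follows the same route as the paper: isolate the sets where the lower estimate holds with a uniform constant $k$ and a uniform radius $1/k$, symmetrize the one-sided inequality over pairs of points in that set to get a two-sided Lipschitz bound, extend by McShane--Whitney, and invoke Federer's $C^1$ Lusin property for Lipschitz functions. The one point where you diverge is precisely the subtlety you flag, the passage from a \emph{local} Lipschitz estimate on $\Omega_k$ to a global one. You resolve it with a decomposition into cubes of diameter less than $1/k$ followed by a partition-of-unity gluing; this works, but note that each $g_Q$ must then be arranged to agree with $f$ on most of $\Omega_k$ intersected with the \emph{thickened} cube (the support of $\psi_Q$), not just with $Q$ itself, since points near $\partial Q$ see neighbouring bump functions. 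The paper avoids the gluing altogether by a cheaper trick: it first reduces to $\Omega$ compact (via a separate lemma) and inserts the extra condition $|f(x)|\leq j$ into the definition of $E_j$; then for $x,y\in E_{j_0}$ with $|y-x|>1/j_0$ one trivially has $|f(y)-f(x)|\leq 2j_0\leq 2j_0^2|y-x|$, so $f$ is \emph{globally} Lipschitz on $E_{j_0}$ and a single McShane extension suffices. You may want to adopt that shortcut.

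Two smaller points. First, the step ``choose $k$ large enough that $\mathcal{L}^n(\Omega\setminus\Omega_k)<\varepsilon/2$'' is not available when $\mathcal{L}^n(\Omega)=\infty$: the complements $\Omega\setminus\Omega_k$ can all have infinite measure even though their intersection is null. You need the preliminary reduction to a bounded (or compact) piece of $\Omega$ --- the ``countable exhaustion'' you mention in passing --- \emph{before} selecting $k$, exactly as in the paper's Lemma on compacta. Second, the theorem does not assume $f$ measurable; the paper observes that hypothesis \eqref{1} forces $f$ to be lower semicontinuous on $\Omega$ minus a null set, which is what makes the sets $\Omega_k$ measurable and the regularity/exhaustion arguments legitimate. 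Neither issue is fatal, but both need a sentence in a complete write-up.
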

In order to facilitate the proof of Theorem \ref{Theorem for k=1}, as well as that of Theorem \ref{Theorem for k=2} below, let us state the following technical lemma, which is standard. We include its proof for the readers' convenience.
\begin{lem}\label{Lusin on compacta is enough}
Let $\Omega$ be a Lebesgue measurable subset of $\R^n$, $k\in\N$, and $f:\Omega\to\R$ be measurable. Then $f$ has the Lusin property of class $C^k$ (meaning that for every $\varepsilon>0$ there exists $g\in C^k(\R^n)$ such that $\mathcal{L}^{n}\left\{x\in\Omega : f(x)\neq g(x)\}\right)\leq\varepsilon$) if and only if the restriction of $f$ to each compact subset of $\Omega$ has the Lusin property of class $C^k$.
\end{lem}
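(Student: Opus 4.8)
The ``only if'' direction is immediate and I would dispose of it first: if $g\in C^k(\R^n)$ satisfies $\mathcal{L}^n\{x\in\Omega:f(x)\neq g(x)\}\leq\varepsilon$ and $K\subseteq\Omega$ is compact, then $\{x\in K:f(x)\neq g(x)\}\subseteq\{x\in\Omega:f(x)\neq g(x)\}$, so the very same $g$ witnesses the Lusin property for $f|_K$. The content of the lemma is the converse, and the only genuine difficulty there is that $\Omega$ need not have finite measure, so one cannot simply exhaust it by a single compact set; instead one must patch together countably many local $C^k$ approximations. Since $C^k$ functions cannot be glued across the (potentially very irregular) boundaries of the exceptional sets, I would not attempt any gluing at all: a partition of unity does the job painlessly.

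So, for the ``if'' direction, here is the plan. Fix $\varepsilon>0$. First take a locally finite open cover $\{U_j\}_{j\geq 1}$ of $\R^n$ by \emph{bounded} open sets and a subordinate $C^\infty$ partition of unity $\{\varphi_j\}_{j\geq 1}$, so that $\mathrm{supp}\,\varphi_j\subseteq U_j$, $\sum_j\varphi_j\equiv 1$ on $\R^n$, and only finitely many $U_j$ meet any given bounded set (for instance $U_1=B(0,2)$ and $U_j=\{x:j-1<|x|<j+1\}$ for $j\geq2$). For each $j$, the set $\overline{U_j}\cap\Omega$ is measurable and of finite measure since $\overline{U_j}$ is bounded; by inner regularity of Lebesgue measure I pick a compact set $K_j\subseteq\overline{U_j}\cap\Omega$ with $\mathcal{L}^n\big((\overline{U_j}\cap\Omega)\setminus K_j\big)<\varepsilon\,2^{-j-1}$. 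Since $K_j$ is a compact subset of $\Omega$, the hypothesis yields $g_j\in C^k(\R^n)$ with $\mathcal{L}^n\{x\in K_j:f(x)\neq g_j(x)\}<\varepsilon\,2^{-j-1}$, and therefore the set $N_j:=\{x\in\overline{U_j}\cap\Omega:f(x)\neq g_j(x)\}$ has $\mathcal{L}^n(N_j)<\varepsilon\,2^{-j}$.

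Then I would set $g:=\sum_j\varphi_j g_j$. Near any point this is a finite sum of products of $C^\infty$ functions with $C^k$ functions, so $g\in C^k(\R^n)$. For the exceptional set: if $x\in\Omega\setminus\bigcup_jN_j$, then for every index $j$ with $\varphi_j(x)\neq0$ we have $x\in\overline{U_j}\cap\Omega$ and $x\notin N_j$, i.e.\ $g_j(x)=f(x)$, whence
$$
g(x)=\sum_j\varphi_j(x)g_j(x)=f(x)\sum_j\varphi_j(x)=f(x).
$$
Thus $\{x\in\Omega:f(x)\neq g(x)\}\subseteq\bigcup_jN_j$, and this set has measure at most $\sum_j\varepsilon\,2^{-j}=\varepsilon$, which is exactly what is needed. (Measurability of the sets involved is clear since each $g_j$ is continuous and $f$ is measurable.) The only step I would single out as the ``hard'' one is this globalisation, the passage from compact pieces to all of $\Omega$; the rest is inner regularity together with a convergent geometric series, and the key idea is simply that a partition of unity replaces any attempt at $C^k$-gluing.
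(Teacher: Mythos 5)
Your proposal is correct and follows essentially the same route as the paper: the ``only if'' direction is dismissed as trivial, and the converse is obtained from inner regularity on bounded annular pieces together with a $C^\infty$ partition of unity subordinate to a locally finite cover by annuli and a convergent geometric series. The one difference is a point in your favour: by defining the exceptional set $N_j$ on the whole closed annulus $\overline{U_j}\cap\Omega$ that carries $\varphi_j$, you control $f\neq g_j$ exactly where $\varphi_j$ can be nonzero, whereas the paper only records the estimate $\mathcal{L}^n\left(\{x\in\Omega_j : f(x)\neq g_j(x)\}\right)\leq\varepsilon/6^j$ on the \emph{disjoint} pieces $\Omega_j$ and then bounds the overlap term $\{x\in\Omega_j : f(x)\neq g_{j-1}(x)\}$ by the same quantity, a step that implicitly requires choosing each $g_i$ to approximate $f$ on the full closed annulus (as you do) rather than on $\Omega_i$ alone.
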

\begin{proof}
It is obvious that if $f:\Omega\to\R$ has the Lusin property of class $C^k$ then, for every compact subset $K$ of $\Omega$, the function $f_{|_K}:K\to\R$ has the Lusin property of class $C^k$. Let us prove the converse. Assume first that $\Omega$ is bounded. By the regularity of the measure $\mathcal{L}^{n}$, for every $\varepsilon>0$ we may find $K_{\varepsilon}$, a compact subset of $\Omega$, such that
$\mathcal{L}^{n}\left(\Omega\setminus K_{\varepsilon}\right)\leq\varepsilon/2$. By assumption, there exists a function $g=g_{K_{\varepsilon}}\in C^k(\R^n)$ such that $\mathcal{L}^{n}\left(\{x\in K_{\varepsilon}: f(x)\neq g(x)\}\right)\leq\varepsilon/2$. Then we have
$$
\mathcal{L}^{n}\left\{x\in\Omega : f(x)\neq g(x)\}\right)\leq
\mathcal{L}^{n}\left(\Omega\setminus K_{\varepsilon}\right)+\mathcal{L}^{n}\left(\{x\in K_{\varepsilon}: f(x)\neq g(x)\}\right)\leq\varepsilon,
$$
and therefore $f:\Omega\to\R$ has the Lusin property of class $C^k$. 

Now let us consider the general case that $\Omega$ is not necessarily bounded. We can write
$$
\Omega=\bigcup_{j=1}^{\infty}\Omega_j, \textrm{ where } \Omega_1=\Omega\cap \textrm{int} B(0,1), \textrm{ and } \Omega_{j+1} :=\Omega\cap \textrm{int} B(0, j+1)\setminus B(0, j),
$$
where $B(x,r)$ denotes the closed ball of center $x$ and radius $r$.
According to the previous argument, for each $j\in\N$ there exists a function $g_{j}\in C^{k}(\R^n)$ such that
$$
\mathcal{L}^{n}\left(\{x\in \Omega_{j}  : g_{j}(x)\neq f(x)\}\right)\leq \frac{\varepsilon}{6^{j}}.
$$
Let $(\psi_{j})_{j=1}^{\infty}$ be a $C^{\infty}$ smooth partition of unity subordinated to the covering $\{\textrm{int} B(0, j+1)\setminus B(0, j-1)\}_{j=1}^{\infty}\cup \{\textrm{int} B(0,1)\}$ of $\R^n$ (see for instance \cite[Ch. 2, Theorem 2.1]{Hirsch}), and let us define
$$
g(x)=\sum_{j=1}^{\infty}\psi_{j}(x)g_{j}(x).
$$
Notice that
$$
\{x\in\Omega_{j} : f(x)\neq g(x)\}\subseteq \bigcup_{i=j-1}^{j}\{x\in \Omega_j : f(x)\neq g_{i}(x)\}.
$$
This implies that
$$
  \mathcal{L}^{n}\left(\{x\in\Omega : f(x)\neq g(x)\}\right)\leq
  2\sum_{j=1}^{\infty}\mathcal{L}^{n}\left( \{x\in \Omega_{j} :
    f(x)\neq g_{j}(x)\}\right) \leq
  2\sum_{j=1}^{\infty}\frac{\varepsilon}{6^{j}}\leq \varepsilon,
$$
and concludes the proof of the Lemma.
\end{proof}

Now let us present the proof of Theorem \ref{Theorem for k=1}.
Let us call $N\subset\Omega$ the set of points for which \eqref{1} does not hold. Since $N$ has measure zero, proving Lusin property of class $C^1$ for the restriction of $f$ to $\Omega\setminus N$ would immediately lead to Lusin property of class $C^1$ for $f$. So we may and do assume in what follows that $N=\emptyset$, and in particular that
$$
\liminf_{y\rightarrow x}\frac{f(y)-f(x)}{|y-x|}>-\infty
$$
for every $x\in\Omega$. Note that this inequality implies that $f$ is lower semicontinuous on $\Omega$, and in particular $f$ is measurable. Now, according to Lemma \ref{Lusin on compacta is enough}, it is enough to check that the restriction of $f$ to every compact subset of $\Omega$ has the Lusin property of class $C^1$, and therefore we may also assume without loss of generality that $\Omega$ is compact. 
Define for each $j\in\N$,
$$
  E_j:=\left\lbrace x\in\Omega:\,f(y)-f(x)\geq -j|y-x|\;\text{for
      all}\; y\in B\left( x,\frac{1}{j}\right)\cap\Omega \right\rbrace
  \cap\left\lbrace x\in\Omega:\,|f(x)|\leq j\right\rbrace.
$$
Because $f$ is lower semicontinuous the sets $$\left\lbrace x\in\Omega:\,f(y)-f(x)\geq -j|y-x|\;\text{for all}\; y\in B\left( x,\frac{1}{j}\right)\cap\Omega \right\rbrace$$ are closed, and by using the measurability of $f$ this implies that each set $E_j$ is measurable. These sets form an increasing sequence such that
$$
\Omega=\bigcup^{\infty}_{j=1}E_j,
$$
so we have
$$
\lim_{j\to\infty}\mathcal{L}^{n}\left(\Omega\setminus E_j\right)=0,
$$
and therefore, for a given $\varepsilon>0$ we may find $j_0\in\N$ large enough such that 
$\mathcal{L}^n(\Omega\setminus E_{j_0})<\frac{\varepsilon}{2}$.

Take now $x,y\in E_{j_0}$. If $|y-x|\leq \frac{1}{j_0}$ then we have
$$
|f(y)-f(x)|\leq j_0|y-x| \textrm{ and } |f(x)|\leq j_0 .
$$   
On the other hand, if $x,y\in E_{j_0}$ and $|y-x|>1/j_0$ then we trivially get
$$
|f(y)-f(x)|\leq  2\sup_{z\in E_{j_0}}|f(z)|\leq M_0 |y-x|,
$$
where $M_0 :=2j_0\left( 1+ \sup_{z\in\Omega}|f(z)|\right)$. 

Observe that $M_0\geq j_0$. Thus in either case we see that
$$
|f(y)-f(x)|\leq M_0 |y-x| \textrm{ and } |f(x)|\leq M_0, \textrm{ for all } x, y\in E_{j_0}.
$$
That is, $f$ is bounded and $M_0$-Lipschitz on $E_{j_0}$. Then we can extend $f$ to a Lipschitz function $F$ on $\R^n$, for instance by using the McShane-Whitney formula
$$
F(x)=\inf_{y\in E_{j_0}}\{f(y)+M_0 |x-y|\},
$$ which defines an $M_0$-Lipschitz function on $\R^n$ that coincides with $f$ on $E_{j_0}$.
Obviously we have
$$\mathcal{L}^n(\left\lbrace x\in\Omega:\,f(x)\neq F(x)\right\rbrace ) \leq \mathcal{L}^n(\Omega\setminus E_{j_0})<\frac{\varepsilon}{2}.$$
But according to the result of Federer's that we mentioned above (see also \cite[Theorem 6.11]{EvansGariepy}), Lipschitz functions have the $C^1$ Lusin property, so we  may find another function $g\in C^1(\R^n)$ such that $\mathcal{L}^n(\left\lbrace x\in\Omega:\,F(x)\neq g(x)\right\rbrace )<\frac{\varepsilon}{2}$. Thus we conclude that
\begin{multline*}
\mathcal{L}^n(\left\lbrace x\in\Omega:\,f(x)\neq g(x)\right\rbrace )=
\\ = \mathcal{L}^n(\left\lbrace x\in E_{j_0}:\,F(x)\neq g(x)\right\rbrace \cup \left\lbrace x\in\Omega\setminus E_{j_0}:\,f(x)\neq g(x)\right\rbrace)\leq\\
\leq \mathcal{L}^n(\left\lbrace x\in E_{j_0}:\,F(x)\neq g(x)\right\rbrace)+\mathcal{L}^n(\Omega\setminus E_{j_0})\leq \frac{\varepsilon}{2}+\frac{\varepsilon}{2}=\varepsilon.
\end{multline*}
\qed

\begin{cor}\label{corollary for the Frechet subdifferential}
Let $U$ be a measurable subset of $\R^n$, $f:U\to\R$ be a measurable function, and define $\Omega=\{x\in U : D^{-}f(x)\neq\emptyset\}$. Then for every $\varepsilon>0$ there exists a function $g\in C^{1}(\R^n)$ such that
$$
\mathcal{L}^{n}\left(\{x\in\Omega : f(x)\neq g(x) \}\right)\leq\varepsilon.
$$
\end{cor}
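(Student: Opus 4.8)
The plan is to obtain the corollary as a short reduction to Theorem \ref{Theorem for k=1}. Recall that, by definition, $\xi\in D^-f(x)$ means
$$
\liminf_{y\to x}\frac{f(y)-f(x)-\langle\xi,y-x\rangle}{|y-x|}\geq 0 .
$$
Hence, for every $x\in\Omega$, choosing any $\xi_x\in D^-f(x)$ and using $\langle\xi_x,y-x\rangle\geq-|\xi_x|\,|y-x|$, we obtain
$$
\liminf_{y\to x}\frac{f(y)-f(x)}{|y-x|}\geq-|\xi_x|>-\infty ,
$$
the limits being taken over $y\in U$, $y\to x$. So condition \eqref{1} of Theorem \ref{Theorem for k=1} holds at every point of $\Omega$, and we would like to apply that theorem to the restriction $f_{|_\Omega}$.

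Since Theorem \ref{Theorem for k=1} is stated for a measurable domain, I would first replace $\Omega$ by the larger set $\Omega'\subseteq U$ consisting of all $x\in U$ for which $\liminf_{y\to x,\,y\in U}\frac{f(y)-f(x)}{|y-x|}>-\infty$; equivalently,
$$
\Omega'=\bigcup_{N,m\in\N}\big\{x\in U:\ f(y)+N|y-x|\geq f(x)\ \text{for all}\ y\in U\cap B(x,\tfrac1m)\big\} .
$$
By the computation above we have $\Omega\subseteq\Omega'$, and condition \eqref{1} holds at every point of $\Omega'$ (the liminf there runs over $y\in U$, hence a fortiori over $y\in\Omega'$). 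To see that $\Omega'$ is measurable it suffices to note that for each fixed $N,m$ the function
$$
\varphi_{N,m}(x):=\inf\big\{f(y)+N|y-x|:\ y\in U\cap B(x,\tfrac1m)\big\}
$$
is upper semicontinuous: if $x_j\to x$ then every $y\in U\cap B(x,\tfrac1m)$ eventually lies in $U\cap B(x_j,\tfrac1m)$, because the ball is open, so $\limsup_j\varphi_{N,m}(x_j)\leq f(y)+N|y-x|$ for each such $y$ and therefore $\limsup_j\varphi_{N,m}(x_j)\leq\varphi_{N,m}(x)$. Consequently $\varphi_{N,m}$ is measurable, each set $\{x\in U:\varphi_{N,m}(x)\geq f(x)\}$ is measurable, and so is $\Omega'$. (A similar argument, maximizing in addition over $\xi$ in a closed ball, shows that $\Omega$ itself is measurable, although this is not needed below.)

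Now Theorem \ref{Theorem for k=1}, applied to the measurable set $\Omega'$ and the function $f_{|_{\Omega'}}$ (whose hypothesis \eqref{1} we have just verified), provides, for a given $\varepsilon>0$, a function $g\in C^1(\R^n)$ with $\mathcal{L}^n(\{x\in\Omega':f(x)\neq g(x)\})\leq\varepsilon$. Since $\Omega\subseteq\Omega'$ we have $\{x\in\Omega:f(x)\neq g(x)\}\subseteq\{x\in\Omega':f(x)\neq g(x)\}$, and hence $\mathcal{L}^n(\{x\in\Omega:f(x)\neq g(x)\})\leq\varepsilon$, which is the desired conclusion. I expect the only mildly delicate point to be the upper-semicontinuity argument establishing measurability of $\Omega'$; everything else is an immediate consequence of the case $k=1$ already proved.
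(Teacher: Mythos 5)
Your proof is correct and follows the route the paper intends: the corollary is stated there without proof as an immediate consequence of Theorem \ref{Theorem for k=1}, the point being exactly your observation that any $\xi_x\in D^{-}f(x)$ yields $\liminf_{y\to x}\frac{f(y)-f(x)}{|y-x|}\geq -|\xi_x|>-\infty$, so that hypothesis \eqref{1} holds on $\Omega$. Your additional step of passing to the measurable superset $\Omega'$ (with the upper-semicontinuity argument for $\varphi_{N,m}$) is a sound and welcome way to handle the measurability of the domain, a point the paper passes over in silence.
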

Here $D^{-}f(x)$ denotes the Fr\'echet subdifferential of $f$ at $x$, that is the set of vectors $\zeta\in\R^n$ such that
$$
\liminf_{h\to 0}\frac{f(x+h)-f(x)-\langle \zeta, h\rangle}{|h|}\geq 0.
$$

\begin{rem}\label{the gradients equal the subgradients ae}
In the above corollary we also have $D^{-}f(x)=\{\nabla g(x)\}$ for almost every $x\in\Omega$ with $f(x)=g(x)$.
\end{rem}
\begin{proof}
Almost every point of the set $A=\{x\in\Omega : f(x)=g(x)\}$ is a point of density $1$ of $A$, and for every such point $x$ and every $\xi_x\in D^{-}f(x)$ we have
\begin{equation*}
0\leq \liminf_{y\to x, y\in A}\frac{f(y)-f(x)-\langle \xi_{x}, y-x\rangle}{|y-x|}=\liminf_{y\to x, y\in A}\frac{g(y)-g(x)-\langle \xi_{x}, y-x\rangle}{|y-x|},
\end{equation*}
and
\begin{equation*}
\lim_{y\to x, y\in A}\frac{g(y)-g(x)-\langle \nabla g(x), y-x\rangle}{|y-x|}=0,
\end{equation*}
hence also
\begin{eqnarray}\label{we are going to have the gradient of g at x equal xi sub x}
\liminf_{y\to x, y\in A}\frac{\langle \nabla g(x)-\xi_x, y-x\rangle}{|y-x|}\geq 0,
\end{eqnarray}
which, because $x$ is a point of density $1$ of $A$ and $h\mapsto \langle \nabla g(x)-\xi_x, h\rangle$ is linear, implies that $\nabla g(x)=\xi_x$. Indeed, we have 
\begin{equation}\label{A is of density 1}
\lim_{r\to 0^{+}}\frac{\mathcal{L}^{n}\left(A\cap B(x, r)\right)}{\mathcal{L}^{n}\left( B(x, r)\right)}=1.
\end{equation}
Assume we had $\zeta:=\nabla g(x)-\xi_x\neq 0$, and consider the sets $$S_{\zeta}:=\{v\in\R^n \, : \, |v|=1, \, \langle\zeta,v\rangle\leq-\frac{1}{2}|\zeta|\},$$ which determines a region of positive surface measure in the unit sphere, and the associated cone
$$
C_{x, \zeta}=\{x+tv \, : \, v\in S_{\zeta}, t>0\},
$$
of which $x$ is thus a point of positive density. Hence $C_{x, \zeta}$ also satisfies, in view of \eqref{A is of density 1}, that
$$
\liminf_{r\to 0^{+}}\frac{\mathcal{L}^{n}\left(A \cap C_{x, \zeta}\cap B(x, r)\right)}{\mathcal{L}^{n}\left( B(x, r)\right)}>0.
$$
In particular there exists a sequence $(y_k)=(x+t_kv_k)\subset A\cap C_{x, \zeta}$ (with $t_k>0$ and $v_k\in S_{\zeta}$, $k\in\N$) such that $\lim_{k\to\infty}y_k=x$. For this sequence we have, because of the definition of $C_{x, \zeta}$, that
$$
\frac{\langle\nabla g(x)-\xi_x, y_k-x\rangle}{|y_k-x|} =\frac{\langle\zeta, t_k v_k\rangle}{t_k}\leq -\frac{1}{2}|\zeta|<0
$$
for all $k\in\N$, which contradicts \eqref{we are going to have the gradient of g at x equal xi sub x}.
\end{proof}

A natural question at this point is the following. Does Corollary \ref{corollary for the Frechet subdifferential} hold true if we replace the Frechet subdifferential by the {\em limiting subdifferential}?
Let us recall that the limiting subdifferential  $\partial_Lf(x)$ of a lower semicontinuous function 
$f:\R^n\to\R$ at a point $x$ consists of all vectors of the form $\zeta =\lim_n\zeta _n$, where $\zeta _n\in D^-f(x_n)$, 
for sequences $\{x_n\}$ satisfying $\lim_nx_n=x$, and $\lim_nf(x_n)=f(x)$;  see \cite{ClarkeEtAl, FerreraBook}, for instance, for elementary properties of this subdifferential.
The question is whether or not the assumption that $\partial _Lf(x)\not= \emptyset$ for every
$x\in \R^n$ implies that $f$ satisfies the Lusin property of order $C^1$. Since one trivially has that $D^-f(x)\subset \partial _Lf(x)$, such a result would be much stronger than Corollary \ref{corollary for the Frechet subdifferential} above. The following example shows that the answer is negative.

\begin{ex}\label{example Takagi}
{\em
We consider the classical Takagi function $T:\R \to \R$ defined as follows. If $D_n$ denotes
the set  of real numbers $\{\frac{k}{2^n}: k\in \Z \}$, and $d(x,D_n)$ is the distance of $x$
to $D_n$, then
$$
T(x)=\sum_{n=1}^{\infty}d(x,D_n)
$$
This function was introduced by Takagi, \cite{Takagi}, as an easy example of a continuous function which is nowhere differentiable.  In \cite[Theorem 2]{BrownKozlowski} it is proved that $T$ does not agree with any $C^1$ function on any set of positive measure, and in particular $T$  does not
satisfy the Lusin property of order $C^1$. However, in \cite[Corollary 1.4]{FerreraGomezGil}, and also implicitly in \cite{GoraStern}, it is proved that $\partial _LT(x)=\R$ for every $x\in \R$.
}
\end{ex}

Concerning the Lusin property of class $C^2$ we have the following result.

\begin{thm}\label{Theorem for k=2}
Let $\Omega\subset\R^n$ be a Lebesgue measurable set, and $f:\Omega\rightarrow\R$ be a function such that for almost every $x\in\Omega$ there exists a vector $\xi_x\in\R^n$ such that
\begin{equation}\label{proximal limit}
\liminf_{y\rightarrow x}\frac{f(y)-f(x)-\langle\xi_x,y-x\rangle}{|y-x|^2}>-\infty.
\end{equation}
Then for every $\varepsilon>0$ there exists a function $g\in C^{2}(\R^n)$ such that
$$
\mathcal{L}^{n}\left(\{x\in\Omega : f(x)\neq g(x)\}\right)\leq\varepsilon.
$$
\end{thm}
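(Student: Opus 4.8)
The plan is to follow the scheme of the proof of Theorem~\ref{Theorem for k=1}, the new ingredient being the observation that a \emph{uniform} version of \eqref{proximal limit} on a set $E$ says exactly that, for a suitable constant $C>0$, the function $\phi:=f+C|\cdot|^2$ admits an affine minorant touching its graph at every point of $E$; this lets us replace $f$ on $E$ by a genuinely convex function, to which the Alberti--Imonkulov theorem (every convex function has the $C^2$ Lusin property) applies. We start with the usual reductions: discard the null set where \eqref{proximal limit} fails and assume it holds for every $x\in\Omega$; since then $f(y)\ge f(x)+\langle\xi_x,y-x\rangle-C_x|y-x|^2$ for $y$ near $x$, the function $f$ is lower semicontinuous on $\Omega$, hence Borel measurable. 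By Lemma~\ref{Lusin on compacta is enough} we may also assume that $\Omega$ is compact, and we put $m_0:=\min_\Omega f$ and $R:=\max_{x\in\Omega}|x|$.

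For each $j\in\N$ consider
$$
H_j:=\Bigl\{(x,\xi)\in\Omega\times B(0,j):\ f(x)\le j,\ f(y)\ge f(x)+\langle\xi,y-x\rangle-j|y-x|^2\ \text{for all}\ y\in\Omega\ \text{with}\ |y-x|<1/j\Bigr\},
$$
and let $E_j$ be the projection of $H_j$ onto the first factor. Exploiting the strict inequality $|y-x|<1/j$ together with the lower semicontinuity of $f$, one checks --- just as in the closedness argument in the proof of Theorem~\ref{Theorem for k=1} --- that $H_j$ is a closed subset of the compact set $\Omega\times B(0,j)$, hence compact; consequently each $E_j$ is compact, in particular measurable. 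The sets $E_j$ increase with $j$, and they cover $\Omega$: given $x\in\Omega$, \eqref{proximal limit} provides $\xi_x$, a radius $r>0$ and a constant $\sigma$ with $f(y)\ge f(x)+\langle\xi_x,y-x\rangle-\sigma|y-x|^2$ for $|y-x|<r$, so $x\in E_j$ whenever $j\ge\max\{\sigma,1/r,f(x),|\xi_x|\}$. Since $\mathcal{L}^n(\Omega)<\infty$ we may now fix $j_0$ with $\mathcal{L}^n(\Omega\setminus E_{j_0})<\varepsilon/2$.

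We now globalize and convexify on $E_{j_0}$. First, treating far-apart pairs exactly as in Theorem~\ref{Theorem for k=1} (now using $f(x)\le j_0$, $|\xi_x|\le j_0$ and $f\ge m_0$ on $\Omega$ to absorb the case $|y-x|\ge 1/j_0$ into a quadratic term), one obtains a constant, say $C:=j_0^2(j_0+|m_0|+1)$, such that
$$
f(y)\ \ge\ f(x)+\langle\xi_x,y-x\rangle-C|y-x|^2\qquad\text{for all }x,y\in E_{j_0}.
$$
Setting $\phi:=f+C|\cdot|^2$ and $\eta_x:=\xi_x+2Cx$, the identity $|y-x|^2=|y|^2-|x|^2-2\langle x,y-x\rangle$ rewrites this as $\phi(y)\ge\phi(x)+\langle\eta_x,y-x\rangle$ for all $x,y\in E_{j_0}$, where $|\eta_x|\le L:=j_0+2CR$. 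Define the convex function
$$
\Phi(x):=\sup_{z\in E_{j_0}}\bigl(\phi(z)+\langle\eta_z,x-z\rangle\bigr)\qquad(x\in\R^n),
$$
which is finite --- hence continuous --- because the numbers $\phi(z)$ and the vectors $\eta_z$ are bounded over the bounded set $E_{j_0}$, and which satisfies $\Phi=\phi$ on $E_{j_0}$: for $x\in E_{j_0}$ each affine term is $\le\phi(x)$ by the supporting-hyperplane inequality just obtained, with equality at $z=x$.

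Finally, since $\Phi:\R^n\to\R$ is convex, the Alberti--Imonkulov theorem \cite{Alberti2,Imonkulov} provides $h\in C^2(\R^n)$ with $\mathcal{L}^n(\{x:\Phi(x)\neq h(x)\})<\varepsilon/2$. Put $g:=h-C|\cdot|^2\in C^2(\R^n)$. Since $f=\phi-C|\cdot|^2=\Phi-C|\cdot|^2$ on $E_{j_0}$, we get $\{x\in E_{j_0}:f(x)\neq g(x)\}\subseteq\{x:\Phi(x)\neq h(x)\}$, and therefore
$$
\mathcal{L}^n\bigl(\{x\in\Omega:f(x)\neq g(x)\}\bigr)\ \le\ \mathcal{L}^n(\Omega\setminus E_{j_0})+\mathcal{L}^n(\{x:\Phi(x)\neq h(x)\})\ <\ \varepsilon.
$$
The step I expect to require the most care is the construction of the sets $E_j$: they must simultaneously be measurable despite the existential quantifier over $\xi$ --- which is why we realize $E_j$ as the projection of a \emph{compact} set, a device that forces the strict inequalities in the definition of $H_j$ and the prior reduction to compact $\Omega$ --- and encode enough uniform data (a bound on $f$, a bound on $\xi_x$, and a uniform quadratic constant) to make both the globalization step and the finiteness of $\Phi$ work. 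Beyond this bookkeeping, the only genuine idea is the semiconvex-to-convex reduction that sends the problem back to the already-settled convex case.
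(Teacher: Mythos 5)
Your proof is correct, and in its second half it takes a genuinely different route from the paper's. Both arguments hinge on the same convexification trick (add a quadratic $C|\cdot|^2$ so that the uniform version of \eqref{proximal limit} on a good set becomes the existence of supporting hyperplanes, then pass to the supremum of those affine functions, as in \cite{KocanWang}). The differences are these. First, you work directly with $f$ and its subgradients, and you handle the measurability problem created by the existential quantifier over $\xi$ by realizing $E_j$ as the projection of a compact set $H_j\subset\Omega\times B(0,j)$ -- a clean device, and your insistence on the strict inequality $|y-x|<1/j$ is exactly what makes $H_j$ closed. The paper avoids this issue altogether by first invoking Theorem \ref{Theorem for k=1} to replace $f$ by a $C^1$ function $g$ on most of $\Omega$ and then using Remark \ref{the gradients equal the subgradients ae} to identify $\xi_x$ with $\nabla g(x)$ a.e., after which its sets $E_j$ are defined with no quantifier over $\xi$. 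Second, and more substantially, once the convex function is in hand you quote the Alberti--Imonkulov theorem \cite{Alberti2, Imonkulov} as a black box and simply subtract the quadratic, whereas the paper instead applies Alexandroff's theorem \cite{Alexandroff} to get a.e.\ twice differentiability of the convex envelope, deduces an approximate first-order Taylor expansion of order $2$ for $g$ on $E_{j_0}$, and concludes via Liu--Tai's characterization \cite{LiuTai}. Your route is shorter and arguably more transparent, but it buys this at the cost of making the result logically dependent on Alberti--Imonkulov: the paper deliberately avoids that theorem precisely so that its corollary on proximal subdifferentials \emph{recovers} the Alberti--Imonkulov result with an independent proof, something your argument cannot do without circularity. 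Apart from that trade-off, every step you take (closedness of $H_j$, the absorption of far-apart pairs into the quadratic using $f\ge m_0$ and $|\xi_x|\le j_0$, finiteness and continuity of $\Phi$, and the equality $\Phi=\phi$ on $E_{j_0}$) checks out.
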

\begin{proof}
Let $N$ be the subset of points for which \eqref{proximal limit} does not hold, and put $\Omega_1=\Omega\setminus N$. Since $N$ has measure zero, it will be enough to show that the restriction $f_1$ of $f$ to $\Omega_1$ has the Lusin property of class $C^2$. Since \eqref{proximal limit} holds for every $x\in\Omega_1$, it follows that $f$ is lower semicontinuous on $\Omega_1$, and in particular $f_1$ is measurable (hence so is $f$, since $N$ has measure zero). Now, according to Lemma \ref{Lusin on compacta is enough}, if we take an arbitrary compact subset $\Omega_2$ of $\Omega_1$, it will be enough for us to check that the restriction $f_2$ of $f_1$ to $\Omega_2$ has the Lusin property of class $C^2$. 

Because \eqref{proximal limit} holds for every $x\in\Omega_2$ and this implies
$$
\liminf_{y\rightarrow x} \frac{f_2(y)-f_2(x)}{|y-x|}>-\infty
$$
for all $x\in\Omega_2$, given $\varepsilon>0$, we may apply Theorem \ref{Theorem for k=1} to get a function $g\in C^1(\R^n)$ such that
$$
\mathcal{L}^n(\left\lbrace x\in\Omega_ 2 \, : \, f_2(x)\neq g(x)\right\rbrace )\leq\frac{\varepsilon}{4}.
$$
Observe also that the set  $A=\{x\in\Omega_2 : f_2(x)=g(x)\}$ is measurable and bounded, and according to the preceding remark we have $\xi_x=\nabla g(x)$ for almost every $x\in A$, so we can find a compact subset $\Omega_3$ of $A$ such that $\mathcal{L}^{n}\left(A\setminus \Omega_3\right)\leq\varepsilon/4$ and $\xi_x=\nabla g(x)$ for all $x\in\Omega_3$. Then we have that
\begin{equation}\label{proximal limit for }
\liminf_{y\rightarrow x, y\in \Omega_3}\frac{g(y)-g(x)-\langle\nabla g(x),y-x\rangle}{|y-x|^2}>-\infty
\end{equation}
for every $x\in\Omega_3$.
Now let us define for each $j\in \N$
$$
E_j:=\left\lbrace x\in \Omega_3 \, : \,g(y)-\langle \nabla g(x),y\rangle\geq g(x)-\langle \nabla g(x),x\rangle-j|y-x|^2\;\text{for all}\; y\in \Omega_3 \right\rbrace,
$$
and note that the sets $E_j$ are measurable and increasing to $\Omega_3$. There exists $j_0\in\N$ such that
$$\mathcal{L}^n(\Omega_3\setminus E_{j_0})\leq \frac{\varepsilon}{4}.$$

It will be enough for us to prove the following:
\begin{claim}\label{4}
We have that
$$
\limsup_{ y\rightarrow x,\;y\in E_{j_0}} \frac{|g(y)-g(x)-\langle \nabla g(x),y-x\rangle|}{ |y-x|^2}<+\infty
$$
for almost every $x\in E_{j_0}$.
\end{claim}
Assume for a moment that the Claim is true, that is, the restriction of  $g$ to $E_{j_0}$ has an approximate $(2-1)$-Taylor polynomial at every $x\in E_{j_0}$. By \cite[Theorem 1]{LiuTai} this is equivalent to saying that the restriction of $g$ to $E_{j_0}$ has the Lusin property of class $C^2$. So we may find a function  $h\in C^2(\R^n;\R)$ such that
$$
\mathcal{L}^n(\left\lbrace x\in E_{j_0} \, ; \, g(x)\neq h(x)\right\rbrace )\leq \frac{\varepsilon}{4},
$$
and we easily conclude that
$$
\mathcal{L}^n(\left\lbrace x\in \Omega_2:\,f_2(x)\neq h(x)\right\rbrace )\leq\varepsilon,
$$
as we wanted to show.

In order to prove Claim \eqref{4} we will borrow some ideas from \cite{KocanWang}.  We define new functions $\widetilde{g}:\R^n\to\R$ and $\hat{g}:\R^n\to\R$ by
$$
\begin{array}{lc} 
\widetilde{g}(x)=g(x)+j_0|x|^2 ,&\; x \in \R^n \\  \hat{g}(x)=\sup\left\lbrace p(x):\, p\;\text{affine and}\;p\leq \widetilde{g}\text{ on } \Omega_3\;\right\rbrace ,&\;  x\in \R^n   
                                                \end{array}
                                                $$
By definition of $E_{j_0}$ we have $\widetilde{g}(y)\geq \widetilde{g}(x)+\langle\nabla\widetilde{g}(x),y-x\rangle\;\text{for all}\; y\in \Omega_3, x\in E_{j_0}$, and by using this inequality it is easy to see that
$$\widetilde{g}(x)=\hat{g}(x)$$
for all $x\in E_{j_0}$. On the other hand, since $\Omega_3$ is compact and $g$ is continuous on $\Omega_3$, it is easy to see that $\hat{g}$ is everywhere finite. Moreover, as a supremum of affine functions, $\hat{g}$ is convex. Therefore $\hat{g}$ is locally Lipschitz on $\Omega_3$. Also $g$ is of class $C^1$, hence so is $\widetilde{g}$. Since the functions $\widetilde{g}$ and $\hat{g}$ agree on $E_{j_0}$, we then also have that
$$
\nabla\hat{g}(x)=\nabla \widetilde{g}(x)$$
for almost every $x\in E_{j_0}$ (see \cite[Theorem 3.3(i)]{EvansGariepy} for instance).

Next, by applying Alexandroff's theorem \cite{Alexandroff} (see also \cite{BusemannFeller} in dimension $2$) with the convex function $\hat{g}$, we obtain that $\hat{g}$ is twice differentiable almost everywhere in $\Omega_3$. This implies that
\begin{multline}\label{equality of second order limits for f tilde and f hat}
\limsup_{ y\rightarrow x,\;y\in
  E_{j_0}} \frac{|\widetilde{g}(y)-\widetilde{g}(x)-\langle \nabla
  \widetilde{g}(x),y-x\rangle|}{ |y-x|^2}= \\ =\limsup_{
  y\rightarrow x,\;y\in E_{j_0}} \frac{|\hat{g}(y)-\hat{g}(x)-\langle \nabla \hat{g}(x),y-x\rangle|}{ |y-x|^2}<+\infty
\end{multline}
for almost every $x\in E_{j_0}$. However, by the definition of $\widetilde{g}(x)=g(x)+j_0|x|^2$, we have
\begin{multline*}
\frac{|g(y)-g(x)-\langle \nabla g(x),y-x\rangle|}{|y-x|^2}\leq \\ \leq
\frac{|g(y)-g(x)-\langle \nabla g(x),y-x\rangle+(j_0(|y|^2+|x|^2-2\langle x,y\rangle)|}{ |y-x|^2} +j_0= \\
= \frac{|\widetilde{g}(y)-\widetilde{g}(x)-\langle \nabla \widetilde{g}(x),y-x\rangle|}{|y-x|^2}+j_0,
\end{multline*}
and by combining with \eqref{equality of second order limits for f tilde and f hat} we immediately obtain Claim \eqref{4}.
\end{proof}

\begin{cor}
Let $\Omega\subset\R^n$ be a Lebesgue measurable set, and $f:\Omega\rightarrow\R$ be a function such that for almost every $x\in\Omega$ there exists a vector $\xi_x\in\R^n$ such that
\begin{equation}\label{proximal limit1}
\limsup_{y\rightarrow x}\frac{f(y)-f(x)-\langle\xi_x,y-x\rangle}{ |y-x|^2}<+\infty.
\end{equation}
Then for every $\varepsilon>0$ there exists a function $g\in C^{2}(\R^n)$ such that
$$
\mathcal{L}^{n}\left(\{x\in\Omega : f(x)\neq g(x)\}\right)\leq\varepsilon.
$$
\end{cor}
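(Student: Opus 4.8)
The plan is to deduce this corollary directly from Theorem \ref{Theorem for k=2} by a change of sign. The key observation is that, for a fixed point $x$ and a fixed vector $\xi_x$, the condition
$$
\limsup_{y\rightarrow x}\frac{f(y)-f(x)-\langle\xi_x,y-x\rangle}{|y-x|^2}<+\infty
$$
is equivalent to
$$
\liminf_{y\rightarrow x}\frac{(-f)(y)-(-f)(x)-\langle-\xi_x,y-x\rangle}{|y-x|^2}>-\infty ,
$$
because $\liminf_{y\to x}(-a_y)=-\limsup_{y\to x}a_y$ for any real-valued expression $a_y$. Consequently, the function $-f:\Omega\to\R$ satisfies hypothesis \eqref{proximal limit} of Theorem \ref{Theorem for k=2} at almost every $x\in\Omega$, with the vector $-\xi_x$ playing the role of $\xi_x$; indeed the exceptional null set is literally the same set where \eqref{proximal limit1} fails for $f$.

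Next I would apply Theorem \ref{Theorem for k=2} to the function $-f$. Given $\varepsilon>0$, it produces a function $h\in C^2(\R^n)$ with
$$
\mathcal{L}^{n}\left(\{x\in\Omega : -f(x)\neq h(x)\}\right)\leq\varepsilon .
$$
Setting $g:=-h$, we have $g\in C^2(\R^n)$ and, since $f(x)\neq g(x)$ if and only if $-f(x)\neq h(x)$, the sets $\{x\in\Omega : f(x)\neq g(x)\}$ and $\{x\in\Omega : -f(x)\neq h(x)\}$ coincide. Hence $\mathcal{L}^{n}\left(\{x\in\Omega : f(x)\neq g(x)\}\right)\leq\varepsilon$, which is exactly the desired conclusion.

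There is essentially no obstacle in this argument: the only points that require (trivial) verification are that measurability and the "for almost every $x$" clause are preserved under $f\mapsto -f$, which hold because $-f$ is measurable precisely when $f$ is and because negation maps Lebesgue null sets to Lebesgue null sets. In fact, since Theorem \ref{Theorem for k=2} already derives lower semicontinuity and measurability of its argument from the hypothesis \eqref{proximal limit}, no measurability assumption on $f$ beyond \eqref{proximal limit1} is actually needed here, and the proof is complete in a few lines.
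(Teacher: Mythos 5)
Your proof is correct and is exactly the paper's argument: the paper disposes of this corollary in one line as ``an immediate consequence of Theorem \ref{Theorem for k=2} applied to $-f$,'' which is precisely the sign-flip you carry out (with $-\xi_x$ as the subgradient and $g=-h$). No further comment is needed.
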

\noindent This is of course an immediate consequence of Theorem \ref{Theorem for k=2} applied to $-f$.

According to Remark \ref{the gradients equal the subgradients ae}, we also have that
$$
\xi_{x}=\nabla g(x) 
$$
for almost every $x\in\Omega$ with  $f(x)=g(x)$.
 
\begin{cor}
Let $f:\R^n\to\R$ be a measurable function, and define $\Omega=\{x\in\R^n : \partial_{P}f(x)\neq\emptyset\}$. Then for every $\varepsilon>0$ there exists a function $g\in C^{2}(\R^n)$ such that
$$
\mathcal{L}^{n}\left(\{x\in\Omega : f(x)\neq g(x)\}\right)\leq\varepsilon.
$$
\end{cor}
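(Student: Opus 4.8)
The plan is to derive this corollary directly from Theorem \ref{Theorem for k=2}. By definition, a vector $\zeta\in\R^n$ lies in $\partial_{P}f(x)$ precisely when there are constants $\sigma\geq 0$ and $\delta>0$ with
$$
f(y)\geq f(x)+\langle\zeta,y-x\rangle-\sigma|y-x|^2\qquad\text{for all }y\in B(x,\delta),
$$
which is exactly the assertion that $\liminf_{y\to x}\big(f(y)-f(x)-\langle\zeta,y-x\rangle\big)/|y-x|^2>-\infty$. Hence, choosing for each $x\in\Omega$ any $\xi_x\in\partial_{P}f(x)$, we obtain vectors for which \eqref{proximal limit} holds at every point of $\Omega$, so the hypothesis of Theorem \ref{Theorem for k=2} is satisfied on $\Omega$. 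Once we know that $\Omega$ is Lebesgue measurable, the theorem applies verbatim and produces, for each $\varepsilon>0$, the desired $g\in C^2(\R^n)$.

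The only genuine point to settle is therefore the measurability of $\Omega$, and this is where I expect the real (mild) work to be. First, $\partial_{P}f(x)\neq\emptyset$ forces $\liminf_{y\to x}f(y)\geq f(x)$, so $f$ is lower semicontinuous at every point of $\Omega$. Let $\bar{f}$ be the lower semicontinuous envelope of $f$, $\bar{f}(x)=\sup_{r>0}\inf_{y\in B(x,r)}f(y)$; it is Borel measurable, satisfies $\bar{f}\leq f$ everywhere, and $\bar{f}=f$ on $\Omega$ by the previous remark. Comparing the defining inequalities of $\partial_{P}f$ and $\partial_{P}\bar{f}$ (one inclusion is immediate from $\bar f\le f$, and for the other one bounds $\bar f$ below by the comparison paraboloid and passes to the lower envelope) one obtains
$$
\Omega=\{x\in\R^n:\bar{f}(x)=f(x)\}\cap\operatorname{dom}\partial_{P}\bar{f}.
$$
The first factor is measurable since $\bar f$ is Borel and $f$ is measurable. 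For the second, write $\operatorname{dom}\partial_{P}\bar{f}=\bigcup_{m,k\in\N}S_{m,k}$, where $S_{m,k}$ consists of the $x$ admitting some $\xi$ with $|\xi|\leq k$ and $\bar{f}(y)-\langle\xi,y-x\rangle+k|y-x|^2\geq\bar{f}(x)$ for all $y\in\bar{B}(x,1/m)$; equivalently $S_{m,k}=\{x:\bar{f}(x)\leq\Theta_{m,k}(x)\}$ with $\Theta_{m,k}(x)=\sup_{|\xi|\leq k}\inf_{y\in\bar{B}(x,1/m)}\big(\bar{f}(y)-\langle\xi,y-x\rangle+k|y-x|^2\big)$. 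Since $(x,\xi,y)\mapsto\bar{f}(y)-\langle\xi,y-x\rangle+k|y-x|^2$ is jointly lower semicontinuous and the ball $\bar{B}(x,1/m)$ together with the parameter set $\{|\xi|\leq k\}$ is compact, a standard compactness-and-subsequence argument shows that $\Theta_{m,k}$ is lower semicontinuous; hence each $S_{m,k}$, and therefore $\Omega$, is Borel.

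With $\Omega$ now known to be measurable, Theorem \ref{Theorem for k=2} finishes the proof. I should also note that if, as phrased in the abstract, one only assumes $\partial_{P}f(x)\neq\emptyset$ for almost every $x$, then no measurability discussion is needed at all: one simply applies Theorem \ref{Theorem for k=2} with $\Omega=\R^n$. The content of the corollary is thus entirely contained in Theorem \ref{Theorem for k=2}; the measurability of the exact domain of $\partial_{P}f$ is the single extra ingredient, and it is the step I would be most careful about when writing the details.
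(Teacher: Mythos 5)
Your proposal is correct and follows the same route as the paper: the paper likewise observes that $\partial_{P}f(x)\neq\emptyset$ is equivalent to the existence of $\xi_x$ satisfying \eqref{proximal limit}, and deduces the corollary as an immediate consequence of Theorem \ref{Theorem for k=2}. The only difference is that you additionally verify the Lebesgue measurability of $\Omega=\{x:\partial_{P}f(x)\neq\emptyset\}$ (a hypothesis Theorem \ref{Theorem for k=2} formally requires but the paper does not comment on), and your argument for that point --- passing to the lower semicontinuous envelope $\bar f$ and writing $\operatorname{dom}\partial_{P}\bar f$ as a countable union of sets of the form $\{\bar f\leq\Theta_{m,k}\}$ with $\Theta_{m,k}$ lower semicontinuous --- is sound.
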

Here $\partial_{P}f(x)$ denotes the proximal  subdifferential of $f$ at $x$, which is defined as the set of all $\zeta\in\R^n$ for which there exist $\sigma, \eta>0$ such that
$$
f\left( y \right) \geq f\left( x \right) + \left\langle {\zeta ,y
- x} \right\rangle  - \sigma |y - x|^2 
$$
for all $y \in B\left( {x,\eta } \right)$. The set $\partial_{P}f(x)$ coincides with $\{\zeta\in\R^n : \zeta=\nabla\varphi(x), \varphi\in C^2(\R^n), f-\varphi$ attains a minimum at $x\}$, so every function $f$ for which the viscosity subdifferential of second order is nonempty at $x$ also has a nonempty proximal subdifferential at $x$. The set $\partial_{P}f(x)$ can also be equivalently defined as the set of vectors $\zeta\in\R^n$ such that
$$
\liminf_{h\to 0}\frac{f(x+h)-f(x)-\langle \zeta, h\rangle}{|h|^2}>-\infty,
$$
so it is clear that the above Corollary is an immediate consequence of Theorem \ref{Theorem for k=2}. Notice also that this corollary allows us to recover, with a different proof, the mentioned result for convex functions established independently by Alberti \cite{Alberti2} and Imonkulov \cite{Imonkulov}.

\medskip

Let us finally present two examples. The first one concerns the following matter: one could erroneously think that if a function $f$ satisfies \eqref{proximal limit} then $f$ will automatically satisfy 
\begin{equation}\label{upper proximal limit}
\limsup_{y\rightarrow x}\frac{f(y)-f(x)-\langle\xi_x,y-x\rangle}{|y-x|^2}<+\infty
\end{equation}
for almost every $x\in\Omega$ as well, and then one could immediately apply Liu-Tai's theorem \cite{LiuTai} to conclude the proof of Theorem \ref{Theorem for k=2}. This is not feasible.
\begin{ex}
{\em Let us first consider a Cantor set of positive measure, $C\subset [0,1]$. More precisely,
$$
C=[0,1]\setminus \bigcup_n J_n
$$
where each $J_n$ is the union of $2^{n-1}$ disjoint intervals of length $\frac{1}{4^n}$ and 
$J_n\cap J_m=\emptyset$ for $n\not= m$. 
$$
J_n=\bigcup_{k=1}^{2^{n-1}}(a_n^k,b_n^k),
$$
where $b_n^k<a_n^{k+1}$ for $k<2^{n-1}$. Let us inductively construct
the sets $J_n$. Setting $J_1=(\frac{3}{8},\frac{5}{8})$, if $n\geq 1$, we assume that
$J_1,\dots ,J_n$ satisfy that
$$
[0,1]\setminus \bigcup_{k=1}^nJ_k
$$
consists in $2^n$ disjoint intervals of length $\frac{1}{2^{n+1}}+\frac{1}{2^{2n+1}}$,
because
$$
\mathcal{L} \bigl([0,1]\setminus \bigcup_{k=1}^nJ_k\bigr) =1-\sum_{k=1}^n \frac{2^{k-1}}{4^k}
=1-\frac{1}{2}(1-\frac{1}{2^n})=\frac{1}{2}+\frac{1}{2^{n+1}}.
$$
For each of these intervals composing $[0,1]\setminus \bigcup_{k=1}^nJ_k$, we consider a subinterval, centered at the corresponding middle point, of length $\frac{1}{4^{n+1}}$. Then
$J_{n+1}$ will be the union of these subintervals. It is clear that $\mathcal{L}(C)=\frac{1}{2}$.

Now let us define a function $f$ in the following way: we set
$$
f(x)=0 \ \ \hbox{for every} \ \ x\in C,
$$
while for every $n\in \N$ and $k=1,\dots ,2^{n-1}$, 
$f:[a_n^k,b_n^k]\to \mathbb{R}$ will be a non negative continuous function
such that $f:(a_n^k,b_n^k)\to \mathbb{R}$ is $C^{\infty}$, 
$$
\max _{x\in I_n^k}f(x)=f(a_n^k+\frac{1}{2}(b_n^k-a_n^k))=\frac{1}{2^n},
$$
and such that $f$, as well as all its one-sided derivatives,  equal $0$ at $a_n^k$ and at $b_n^k$.
It is clear that $f$ is continuous. Let us denote
$$
\Delta _x(y)=\frac{f(y)-f(x)-\xi _x(y-x)}{|y-x|^2}.
$$
If $x\not\in C$ then, taking $\xi _x=f'(x)$, we have $\lim_{y\to x}\Delta _x(y)=\frac{1}{2}f''(x)$. 
If $x\in C$, then
$$
\frac{f(y)-f(x)}{|y-x|^2}\geq 0.
$$
Hence for every $x$ there exists $\xi _x$ such that
$$
\liminf _{y\to x}\Delta _x(y)>-\infty.
$$
Let us observe that $f$ also satisfies conditions of the form
$$
\liminf_{y\to x}\frac{f(y)-P(y-x)}{|y-x|^k}>-\infty,
$$
where $P$ is a polynomial of degree $k-1$ for every $k$.  

Now let $\tilde{C}=C\setminus ( \{ 0,1\} \cup \{ a_n^k,b_n^k\} _{n,k})$.
We claim that 
$$
\limsup_{y\to x}\Delta _x(y)=+\infty
$$
for every $x\in \tilde{C}$ and every $\xi _x$. Let us prove this.
If $x\in \tilde{C}$ there exist subsequences  $\{ a_{m_j}^{r_j}\} _j$ 
and $\{ b_{n_j}^{k_j}\} _j$, decreasing and increasing respectively, such that
$$
 \lim_ja_{m_j}^{r_j} =\lim_jb_{n_j}^{k_j}=x.
$$
More precisely, we chose $a_{m_j}^{r_j}$ such that
$$
0<a_{m_j}^{r_j}-x \leq \frac{1}{2^{m_j+1}}+\frac{1}{2^{2m_j+1}}, 
$$
and $b_{n_j}^{k_j}$ such that
$$0<x- b_{n_j}^{k_j} \leq
\frac{1}{2^{n_j+1}}+\frac{1}{2^{2n_j+1}}.
$$
Let us consider the case that $\xi _x\geq 0$. 
We take $y_j=b_{n_j}^{k_j}-\frac{1}{2}(b_{n_j}^{k_j}-a_{n_j}^{k_j})$. We have
$$
\Delta _x(y_j)\geq \frac{f(y_j)}{|y_j-x|^2}=\frac{1}{2^{n_j}}\frac{1}{|y_j-x|^2}\geq 2^{n_j}
$$
since
$|y_j-x|\leq \frac{1}{2^{n_j}}$. In particular we obtain that $\limsup_{y\to x}\Delta _x(y)=+\infty$.

The case $\xi _x\leq 0$ can be dealt with similarly by considering
$y_j=a_{m_j}^{r_j}+\frac{1}{2}(b_{m_j}^{r_j}-a_{m_j}^{r_j})$. \qed 

}
\end{ex}

Our second example shows that there are no analogues of Theorem \ref{Theorem for k=2} for higher order of differentiability. 

\begin{ex}
{\em
  Let $f:\mathbb{R}\to \mathbb{R}$ be 
  the function given by
  \begin{equation*}
    f(x)=\frac{1}{\pi^2}\sum_{n=1}^\infty 2^{-3n}\cos\left( 2^{n} \pi x\right).
  \end{equation*}
  This is a $C^2$ function such that $f''$ is not differentiable at
  any point (see \cite{Hardy}) and 
  \begin{equation*}
    \limsup_{|y|\to 0} \frac{\left|f''(x+y)+f''(x-y)-2f''(x)\right|}{|y|}<+\infty
  \end{equation*} 
for every $x\in \mathbb{R}$ (see \cite[p. 148]{Stein}). By \cite[Theorem 4]{LiuTai} $f''$ is not approximately differentiable on a
set of positive measure.

  For every $x$, we have that
$$
\lim_{y\to x}\frac{f(y)-f(x)-f'(x)(y-x)-\frac{1}{2}f''(x)(y-x)^2}{|y-x|^2}=0
$$
If $a>0$ 
 we have 
$$
\liminf_{y\to x}\frac{f(y)-f(x)-f'(x)(y-x)-(\frac{1}{2}f''(x)-a)(y-x)^2}{|y-x|^2}>0,
 $$
hence
$$
\liminf_{y\to x}\frac{f(y)-f(x)-f'(x)(y-x)-(\frac{1}{2}f''(x)-a)(y-x)^2}{|y-x|^k}=\infty >-\infty
$$
for every $k>2$.
If an
analogue of Theorem \ref{Theorem for k=2} for some order $k>2$ were true
for this function $f$, then, according to Liu-Tai's characterization of Lusin properties and approximate differentiability of higher order \cite{LiuTai}, we would have that $f$ is approximately differentiable of order $k$. However, in \cite[p. 194]{LiuTai} it is shown that the coefficients of order $j$ of the Taylor expansion of an approximately differentiable function of order $k$
coincide, up to sets of arbitrarily small measure, with derivatives of order $j$ of $C^k$ functions; in particular those coefficients have the Lusin property of class $C^{k-j}$ and therefore, again by \cite[Theorem 1]{LiuTai}, they are almost everywhere approximately differentiable of order $k-j$. This would imply that $f''$ is approximately
differentiable almost everywhere, which we know to be false.

Another example can be given by taking $g:\R\to\R$ to be a continuous function which is nowhere approximately differentiable (see \cite[Chapter 6]{Kharazishvili}), setting $$f(x)=\int_{0}^{x}\left(\int_{0}^{t}g(s)ds\right)dt,$$ and repeating the preceding argument word by word. One could also use as $g$ the Takagi function of Example \ref{example Takagi}, which by \cite[Theorem 2]{BrownKozlowski} and \cite{LiuTai} is not approximately differentiable on any set of positive measure.
 }
\end{ex}

\end{document}